\numberwithin{equation}{section}
\newtheorem{theorem}{Theorem}
\newtheorem{corollary}[theorem]{Corollary}
\newtheorem{proposition}[theorem]{Proposition}
\newtheorem{lemma}[theorem]{Lemma}
\theoremstyle{definition}\newtheorem{Example}[theorem]{Example}
\title{Non-symplectic actions on complex projective spaces}
\author{Marek Kaluba and Wojciech Politarczyk}
\begin{document}

\maketitle
\begin{abstract}
\noindent
We construct smooth actions of arbitrary compact Lie groups on complex projective spaces, such that the corresponding transformations arising from the group action do not preserve any symplectic structure on the complex projective space.
\end{abstract}

\section{Main Theorems}

Let $G$ be a compact Lie group and let $M$ be a symplectic manifold. Then one may ask whether or not, every smooth action of $G$ on $M$\! is symplectic, i.e., there exists a symplectic form $\omega$ on $M$ such that ${\theta_g}^{\ast} \omega = \omega$ for all $g \in G$, where $\theta_g \colon M \to M$ is given by $\theta_g(x) = gx$ for all $x \in M$.

The first examples of closed symplectic manifolds $M$ with non-symplectic smooth $G$-actions go back to Hajduk, Pawałowski, and Tralle
\cite{Hajduk-Pawalowski-Tralle}, where the $G$-fixed point set and the $H$\nobreakdash-isotropy point set are not symplectic manifolds
for $G=S^1$ and $H=\mathbb{Z}/_{pq}$, where $(p,q)=1$. According to \cite[Lemma~2.1]{Hajduk-Pawalowski-Tralle}, actions with such $G$-fixed points sets are not
symplectic.

Recall that Audin \cite[pp.~104--111]{Audin} has described oriented compact smooth $4$\nobreakdash-manifolds $M$ which admit smooth actions of the circle $S^1$
but do not admit any symplectic forms preserved by the actions of $S^1$. However, it is not clear whether those $4$-manifolds $M$ are symplectic 
at all.

In this paper, for any compact Lie group $G$, we construct non-symplectic smooth actions 
of $G$ on complex projective spaces using the conclusions of Theorems \ref{thm:main1} and 
\ref{thm:main2} below.

\begin{theorem}\label{thm:main1}
Let $G$ be a compact Lie group and let $F$ be a compact $\mathbb{Z}$-acyclic smooth $(2d+1)$\nobreakdash-manifold for $d \geq 1$. Then there exists a smooth action of\/ $G$ on a complex projective space such that the fixed point set is diffeomorphic to the disjoint union of\/ $\mathbb{C}P^d \# \partial F$ and a number of complex projective spaces.
\end{theorem}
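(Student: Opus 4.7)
The plan is to start with a linear $G$-action on a complex projective space that already has $\mathbb{C}P^{d}$ as one fixed-point component, and then modify it locally near a single fixed point of that component so that the $\mathbb{C}P^{d}$ is replaced by $\mathbb{C}P^{d}\#\partial F$, while the ambient diffeomorphism type and the remaining fixed-point components are preserved. Concretely, I would begin with a complex $G$-representation $V=\mathbb{C}^{d+1}\oplus V_{1}$ in which $G$ acts trivially on the first summand and $V_{1}$ has no trivial sub-representation. The induced linear action on $\mathbb{P}(V)$ then has fixed-point set $\mathbb{P}(\mathbb{C}^{d+1})\cong\mathbb{C}P^{d}$ together with the projectivisations of the isotypical components of $V_{1}$; the latter already furnish the ``number of complex projective spaces'' appearing in the conclusion.

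To introduce $F$ into the picture, I would invoke the Mostow--Palais equivariant embedding theorem to embed $F$ (with trivial $G$-action) into a real $G$-representation $U$ with no nonzero fixed vectors. Enlarging $V_{1}$ by $U$ if necessary, the slice representation at a chosen fixed point $p\in\mathbb{C}P^{d}\subset\mathbb{P}(V)$ contains $U$, so a $G$-invariant tubular neighbourhood of $p$ is equivariantly diffeomorphic to a disk in $T_{p}\mathbb{C}P^{d}\oplus U\oplus U'$ for a suitable equivariant complement $U'$. I would then excise a small invariant sub-disk around $p$ and glue in an equivariant model patch built from a tubular neighbourhood of $F\subset U$, matched along a common sphere. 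On the level of fixed points this removes an open $2d$-disk from $\mathbb{C}P^{d}$ and attaches $F$ along its boundary $\partial F$, so the new fixed-point component is exactly $\mathbb{C}P^{d}\#\partial F$ while the other components are untouched.

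The main obstacle, I expect, is to verify that the ambient manifold after this modification is still diffeomorphic to $\mathbb{C}P^{n}$ and not merely to a homology $\mathbb{C}P^{n}$. This is precisely where the hypothesis of $\mathbb{Z}$-acyclicity is essential: by Lefschetz duality, $\partial F$ is an integral homology sphere and $F$ itself is $\mathbb{Z}$-acyclic, so the local surgery replaces a ball by an acyclic piece and does not alter the integral homology of $\mathbb{P}(V)$. Upgrading this to a diffeomorphism with the standard $\mathbb{C}P^{n}$ should follow from an equivariant $h$-cobordism (or topological-recognition) argument, exploiting that the modification is supported in an equivariant ball; alternatively, one can perform the analogous construction inside an invariant disk in a representation sphere $S^{2n+1}$ and then descend to $\mathbb{C}P^{n}$ via the Hopf fibration, where the Pawa\l owski-type machinery for realising $\mathbb{Z}$-acyclic manifolds as fixed-point sets of smooth actions on disks applies directly.
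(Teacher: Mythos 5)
Your linear model is fine and matches the paper's Lemma~\ref{lem:module-realization}: the projectivisation of $\mathbb{C}^{d+1}\oplus V_1$ with $V_1^G=0$ has fixed set $\mathbb{C}P^d$ together with projectivisations of isotypical pieces, and the tangent module at a point of $\mathbb{C}P^d$ is $\mathbb{C}^n_{\varrho}\oplus\mathbb{C}^d$. The gap is in the ``patch.'' First, the Mostow--Palais step is incoherent: if $G$ acts trivially on $F$, any equivariant embedding of $F$ into a representation $U$ lands in $U^G$, so no such $U$ without nonzero fixed vectors exists; and an invariant tubular neighbourhood of $F$ in a representation is essentially $F\times D(U)$, which is neither an invariant disk nor has a sphere as boundary, so excising an invariant ball from $\mathbb{P}(V)$ and gluing this in neither preserves the ambient diffeomorphism type nor yields the claimed fixed set. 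Second, the fixed-point bookkeeping is dimensionally wrong: you say the surgery ``attaches $F$ along $\partial F$'' to $\mathbb{C}P^d$ minus a $2d$-disk, but $\mathbb{C}P^d\smallsetminus \operatorname{int}D^{2d}$ has boundary $S^{2d-1}$, while $\partial F$ is a closed $2d$-manifold and $F$ is $(2d+1)$-dimensional; what must be glued in is $\partial F$ minus an open $2d$-disk. Concretely, what you need (and do not construct) is a smooth $G$-action on the \emph{standard} sphere $S^{2n+2d}$ with fixed set $\partial F$ and tangent module $\mathbb{R}^{2n}_{\varrho}\oplus\mathbb{R}^{2d}$ at some fixed point, followed by an equivariant connected sum with $\mathbb{C}P^{n+d}$ at matching slices.

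Producing that sphere is the actual content of the theorem, and it is not an embedding trick: one first realizes $F$ as the fixed set of a finite contractible $G$-CW complex (for $\mathbb{Z}$-acyclic $F$ this uses the join $G\ast F$, Jones, or Oliver depending on $G$ --- this is where acyclicity enters, through Oliver's Euler-characteristic obstruction, not through preserving $H_*(\mathbb{P}(V))$), and then applies Pawa{\l}owski's equivariant thickening to obtain a smooth $G$-action on an honest disk $D^{2n+2d+1}$ with fixed set $F$ and product normal bundle $\varepsilon_F^{\ell V}$; restricting to the boundary gives the required action on the standard $S^{2n+2d}$ with fixed set $\partial F$ and the right slice representation. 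Because the connected sum is then taken with a standard sphere, $\mathbb{C}P^{n+d}\#\,S^{2n+2d}\cong\mathbb{C}P^{n+d}$ automatically, so the issue you flag as the ``main obstacle'' (homology count plus an equivariant $h$-cobordism, or descending through the Hopf fibration, which would additionally require compatibility with the free $S^1$-action and is unjustified) never arises. In short, your architecture is close to the paper's, but the two essential inputs --- the contractible $G$-CW complex with $X^G=F$ and the equivariant thickening with controlled normal data --- are missing and cannot be replaced by the tubular-neighbourhood gluing you propose.
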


For any compact Lie group $G$, Theorem~\ref{thm:main1} allows to obtain
non-symplectic smooth actions of $G$ on complex projective spaces.
In fact, consider a smooth $\mathbb{Z}$-homology $4$\nobreakdash-sphere $N$, a closed
smooth $4$\nobreakdash-manifold with $\mathbb{Z}$-homology of the $4$\nobreakdash-sphere.
According to Kervaire \cite[Theorem~3, p.~70]{Kervaire}, $N$ is diffeomorphic to the boundary $\partial F$ of a contractible smooth $5$\nobreakdash-manifold $F$.
Therefore, by Theorem~\ref{thm:main1}, there exists a smooth action of $G$ on  a complex projective space such that the fixed point set contains a connected component diffeomorphic to $\mathbb{C}P^2 \# N$, which is an almost complex manifold due to \cite[Theorem~1.4.15]{Gompf-Stipsicz}. However, using the celebrated result of Taubes \cite{Taubes}, we prove that $N$ can be chosen so that $\mathbb{C}P^2 \# N$ is not a symplectic manifold and hence, the action is not symplectic (see Proposition~\ref{prop:not_symplectic} and Example~\ref{expl:hmlgy sphereCPn}).

Let $G$ be a compact Lie group such that the identity connected component $G_0$ of $G$
is non-abelian or $G/G_0$ is not of prime power order, i.e., $G$ is not isomorphic to a torus, 
finite $p$-group or its extension by a torus. Then, by the work of Oliver \cite{Oliver:1} and \cite{Oliver:2}, 
there exists an integer $n_G \geq 0$ which is the only obstruction for a finite CW complex $F$ to occur 
as the fixed point set in a finite contractible $G$-CW complex $X$. In fact, there exists such an $X$ 
with $X^G = F$ if and only if $\chi(F)\equiv 1 \pmod {n_G}$. We refer to $n_G$ as to the \emph{Oliver number} 
of $G$. Recall that $n_G$ has been computed by Oliver \cite{Oliver:3} when $G$ is finite. In general, $n_G = n_{G/G_0}$ 
when $G_0$ is abelian (i.e., $G_0$ is a torus or the trivial group), and $n_G = 1$ when $G_0$ is non-abelian.

\begin{theorem}\label{thm:main2}
Let $G$ be a compact Lie group such that $G_0$ is non-abelian or $G/G_0$ is not of prime power order. Let $F$ be a compact stably parallelizable smooth $(2d+1)$\nobreakdash-manifold for $d \geq 1$. Assume that $\chi(F) \equiv 1 \pmod {n_G}$. 
Then there exists a smooth action of $G$ on a complex projective space such that the fixed point set is diffeomorphic 
to the disjoint union of $\mathbb{C}P^d \# \partial F$ and a number of complex projective spaces.
\end{theorem}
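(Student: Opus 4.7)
The plan is to reduce Theorem~\ref{thm:main2} to Theorem~\ref{thm:main1} by packaging the Oliver-type hypotheses into a smooth auxiliary manifold that does satisfy the $\mathbb{Z}$-acyclicity assumption. Concretely, I would construct from the data $(G,F)$ a compact smooth $G$-manifold $W$ of dimension $2d+1$ which is contractible (hence $\mathbb{Z}$-acyclic) and satisfies $W^G=F$, so that $(\partial W)^G=\partial F$. Running the proof of Theorem~\ref{thm:main1} on $W$ in the $G$-equivariant category then produces a smooth $G$-action on a complex projective space whose fixed set contains $\mathbb{C}P^d \# (\partial W)^G = \mathbb{C}P^d \# \partial F$, together with the additional $\mathbb{C}P^{d_i}$ components.

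The manifold $W$ would be produced in two stages. First, by Oliver's realization theorem \cite{Oliver:1, Oliver:2}, the assumption on $G$ ensures that $n_G$ is the only obstruction to realizing a finite complex as the fixed set of a finite contractible $G$-CW complex, so the hypothesis $\chi(F)\equiv 1 \pmod{n_G}$ produces a finite contractible $G$-CW complex $X$ with $X^G=F$. Second, I would equivariantly smooth and thicken $X$ to a compact smooth $G$-manifold $W$ with $W^G=F$ and $W$ homotopy equivalent to $X$. This smoothing requires specifying an equivariant normal bundle along $F$ whose fibres are orthogonal $G$-representations with no trivial summand (so that $F$ remains the full fixed set), and then gluing these local normal models to the $G$-free part of the thickening of $X\setminus F$. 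The stable parallelizability of $F$ is what makes the global assembly possible: it trivializes the stable obstruction coming from $TF$ and reduces the problem to a pointwise representation-theoretic choice.

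Once $W$ is available, the remainder of the argument consists of running the proof of Theorem~\ref{thm:main1} with $W$ playing the role of $F$ but keeping track of the $G$-action on $W$. Since $W$ is $\mathbb{Z}$-acyclic and of the correct dimension, the construction applies and produces a smooth action of $G$ on a complex projective space; tracing $G$-fixed points through the construction replaces $\partial W$ by its $G$-fixed part $\partial F$, giving exactly the fixed set described in the statement.

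The main obstacle I anticipate is the equivariant smoothing step. Without the stable parallelizability of $F$ there is no reason a smooth $G$-thickening of $X$ with $W^G=F$ need exist: characteristic classes of $TF$ could obstruct the choice of a suitable equivariant normal bundle, or could force its generic fibre to acquire a trivial $G$-summand, in which case $W^G$ would strictly contain $F$. Stable parallelizability is designed precisely to annihilate these obstructions, turning the global assembly of the equivariant normal data into a purely local representation-theoretic task. The remaining steps, being equivariant transcriptions of the argument for Theorem~\ref{thm:main1}, should not introduce essentially new difficulties.
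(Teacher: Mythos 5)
There is a genuine gap, and it appears already at the first step. You ask for a compact \emph{contractible} smooth $G$-manifold $W$ of dimension $2d+1$ with $W^G=F$, where $F$ itself is a $(2d+1)$-manifold. Such a $W$ cannot exist in general: the fixed point set of a smooth action is a closed submanifold, so a fixed set of full dimension is a union of connected components of $W$; since $W$ is contractible it is connected, so $W^G=F$ forces $F=W$ (the action is trivial) and in particular $F$ contractible. But the $F$ of Theorem~\ref{thm:main2} need not be contractible --- it may be disconnected and may have $\chi(F)=1+n_G\neq 1$ (cf.\ Example~\ref{expl:euler characteristic}). The point of the equivariant thickening is precisely that it \emph{raises} the dimension: the normal $G$-representation along $F$ must be nonzero with no trivial summand (otherwise $F$ would not be the whole fixed set), so any smooth $G$-model of Oliver's complex $X$ has dimension at least $(2d+1)+\dim V$. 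Your instinct that stable parallelizability kills the bundle-theoretic obstructions is correct, but it is used to thicken $X$ to a $G$-action on a high-dimensional disk $D^{2n+2d+1}$ with $D^G=F$ and normal bundle the product bundle $\varepsilon_F^{\ell V}$ (Theorem~\ref{thm:thickening}), not to produce a $(2d+1)$-dimensional contractible $G$-manifold homotopy equivalent to $X$.

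The second problem is that ``running the proof of Theorem~\ref{thm:main1} on $W$ in the $G$-equivariant category'' is not a reduction to Theorem~\ref{thm:main1}. Theorem~\ref{thm:main1} takes a bare manifold and produces \emph{some} action; applied to a manifold $W$ that already carries a $G$-action it gives no compatibility with that action, and your claim that tracking fixed points replaces $\partial W$ by $(\partial W)^G$ is exactly the assertion that needs proof --- as written it would involve two interacting actions and no argument is offered for why the resulting fixed set is $\mathbb{C}P^d\#\partial F$ plus projective spaces. What actually closes the argument (and is the paper's route) is: restrict the disk action to the boundary sphere $S^{2n+2d}$, whose fixed set is $\partial F$ and whose tangent representation at fixed points is $\mathbb{R}^{2n}_{\varrho}\oplus\mathbb{R}^{2d}$ with $\varrho$ free of trivial summands; separately build the linear action on $\mathbb{C}P^{n+d}$ induced by $\varrho\oplus\mathrm{triv}$, whose fixed set contains a copy of $\mathbb{C}P^d$ with the same tangent representation (Lemma~\ref{lem:module-realization}); then use the Slice Theorem to form the equivariant connected sum $\mathbb{C}P^{n+d}\#_{x,y}S^{2n+2d}\cong\mathbb{C}P^{n+d}$ at matched fixed points (Lemma~\ref{lem:manifold-realization}). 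Note also that it matters that the thickening produces a \emph{disk}, so that the boundary is a genuine sphere and the ambient connected sum is again a complex projective space; a merely contractible $W$ would only have a homology-sphere boundary. This equivariant connected-sum step with the linear projective space, together with the matching of tangent representations, is the essential content that your proposal leaves hidden inside ``run the proof of Theorem~\ref{thm:main1}.'' Your first stage (invoking Oliver to get a finite contractible $G$-CW complex $X$ with $X^G=F$ from $\chi(F)\equiv 1\pmod{n_G}$, resp.\ from $G_0$ non-abelian) is correct and coincides with the paper's Proposition~\ref{pro:G-CW-complex}.
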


Clearly, in Theorem \ref{thm:main2}, $\mathbb{C}P^d \# \partial F\cong(\mathbb{C}P^d \# N_0)\sqcup N_1\sqcup\ldots\sqcup N_k$ 
for the connected components $N_0,N_1,\ldots, N_k$ of $\partial F$. If we choose $F$ in such a way that some of the $N_i$'s are not symplectic manifolds, then Theorem~\ref{thm:main2} yields examples of non-symplectic smooth actions on complex projective spaces (see Example \ref{expl:euler characteristic} and the subsequent discussion).

For any group $G$ in Theorem \ref{thm:main2}, the conclusion of Theorem \ref{thm:main1} follows 
from Theorem \ref{thm:main2} (as any $\mathbb{Z}$\nobreakdash-acyclic smooth manifold $F$ is stably parallelizable and the Euler characteristic $\chi(F)=1$).

In both theorems, we argue as follows. First, we start with a construction of a smooth action of $G$ on a disk $D$ with a prescribed fixed point set $F$. Then, we restrict the action to the boundary $S=\partial D$ and, finally, we form the connected sum of $S$ and the complex projective space with appropriate linear action of $G$. 

Using the arguments above, we can obtain smooth actions of $G$ on complex projective spaces for more general manifolds $F$ than in Theorems \ref{thm:main1} and \ref{thm:main2}. 

For example,  we could have taken the fixed point sets of actions on disks described in the articles of Oliver \cite[Theorems 0.1 and 0.2]{Oliver:4} or Pawa{\l}owski \cite[Theorem~A]{Pawalowski:1}. However, for the sake of simplicity, we decided to restrict our attention to the case where $F$ is stably parallelizable. This allows us to deal only with  product $G$\nobreakdash-vector bundles when applying the equivariant thickening procedure (cf. Theorem \ref{thm:thickening}).\\[0.1in]
The remaining part of the material is organized as follows.
In Section~\ref{sec:2-examples}, we present two examples justifying that by Theorems \ref{thm:main1} and \ref{thm:main2}, non-symplectic smooth manifolds can be realized as the fixed point sets of smooth $G$-actions on complex projective spaces. 

In Section~\ref{sec:grp-actions-on-D-and-S}, for any compact Lie group $G$, we construct smooth actions of $G$ on disks $D$ with a given stably parallelizable manifold $F$ occurring as the fixed point set. In Section~\ref{sec:grp-actions-on-CPn}, by restricting the actions of $G$ on $D$ to its boundary $S$ and taking the equivariant connected sum of the appropriate complex projective space and $S$, we obtain smooth actions of $G$ which show that Theorems~\ref{thm:main1} and \ref{thm:main2} both hold.

We refer the reader to the textbooks by Bredon \cite{Bredon}, tom Dieck \cite{tomDieck}, or Kawakubo \cite{Kawakubo}
for the background material on transformation groups that we use in this paper.\\[0.2in]
\noindent
{\sl Acknowledgements.} We express our thanks to Krzysztof Pawa{\l}owski for his information on the existence of
non-symplectic smooth actions on symplectic manifolds as in \cite{Hajduk-Pawalowski-Tralle}, and his help in obtaining
the results of this paper. Also, we would like to express our gratitude to the referee for his comments which allowed us
to improve the presentation of the material.

\section{Examples of non-symplectic manifolds}\label{sec:2-examples}
For any compact Lie group $G$, the existence of non-symplectic smooth actions of $G$ on complex projective spaces follows from Theorem~\ref{thm:main1}, Proposition~\ref{prop:not_symplectic}, and Example~\ref{expl:hmlgy sphereCPn}.
Hereafter, for a closed $4$\nobreakdash-manifold $M$, $b_2^+(M)$ is the number of positive entries in a diagonalization of the intersection form of $M$ over $\mathbb{Q}$.
By the work of Taubes \cite{Taubes}, for two closed smooth $4$\nobreakdash-manifolds $M$ and $M'$ with $b_2^+ > 0$, the connected sum $M \# M'$ never admits a symplectic structure (cf. \cite[Proposition~10.1.14]{Gompf-Stipsicz}). We shall make use of this result in the proof of the following proposition (cf. \cite[Proposition 1]{Kotschick-Morgan-Taubes}).

\begin{proposition}\label{prop:not_symplectic}
Let $M$ and $N$ be two closed oriented smooth $4$-manifolds such that $b_2^+(M)>0$ and $\pi_1(N)$ has a subgroup of finite index $r>1$. 
Then the connected sum $M\#N$ is not a symplectic manifold.
\end{proposition}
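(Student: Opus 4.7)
The plan is to exploit the finite-index subgroup $H$ of $\pi_1(N)$ to build a finite-sheeted covering of $M\#N$ which decomposes as a nontrivial connected sum of two pieces each with $b_2^+>0$, and then apply the Taubes obstruction recalled above.

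First, replace $H$ by its normal core $H_0:=\bigcap_{g\in\pi_1(N)} gHg^{-1}$. Since $H$ has finite index $r>1$, the core $H_0$ is normal of finite index $s\geq 2$ in $\pi_1(N)$ (finite because $H$ has finite index; at least $2$ because $H_0\subseteq H\subsetneq \pi_1(N)$). Let $q\colon \pi_1(N)\to Q:=\pi_1(N)/H_0$ denote the quotient, so $|Q|=s$. By van Kampen, $\pi_1(M\#N)\cong \pi_1(M)*\pi_1(N)$, and I would define $\phi\colon \pi_1(M)*\pi_1(N)\to Q$ to be the homomorphism that kills $\pi_1(M)$ and restricts to $q$ on $\pi_1(N)$. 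Its kernel has index $s$, so it classifies an $s$-sheeted regular covering $p\colon \widetilde{M\#N}\to M\#N$.

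Next, I would identify this cover explicitly. Writing $M\#N=(M\setminus\mathring B^4)\cup_{S^3}(N\setminus\mathring B^4)$, the map $\phi$ is trivial on $\pi_1(M\setminus\mathring B^4)\cong \pi_1(M)$ and on $\pi_1(S^3)=1$, so the $p$-preimages of these two pieces are $s$ disjoint copies of $M\setminus\mathring B^4$ and $s$ disjoint $3$-spheres, respectively. On the other hand, $\phi$ restricted to $\pi_1(N\setminus\mathring B^4)\cong\pi_1(N)$ equals $q$, hence the preimage of $N\setminus\mathring B^4$ is the cover $\widetilde N_0\to N$ corresponding to $H_0$, with $s$ open $4$-balls removed. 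Regluing the $s$ copies of $M\setminus\mathring B^4$ to the $s$ boundary spheres in $\widetilde N_0$ yields
\[
\widetilde{M\#N}\;\cong\;\widetilde N_0\,\#\,\underbrace{M\,\#\cdots\#\,M}_{s\text{ copies}}.
\]

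Finally, I would group this as $A\#B$ with $A=M$ and $B=\widetilde N_0\,\#\,M\,\#\cdots\#\,M$ containing $s-1\geq 1$ copies of $M$. By additivity of $b_2^+$ under connected sum, $b_2^+(A)=b_2^+(M)>0$ and $b_2^+(B)\geq (s-1)\,b_2^+(M)>0$. By the Taubes result stated before the proposition, $\widetilde{M\#N}=A\#B$ admits no symplectic form. Since a symplectic form on $M\#N$ would pull back along the finite covering $p$ to a symplectic form on $\widetilde{M\#N}$, we conclude that $M\#N$ is not symplectic.

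The main step requiring care is the explicit identification of $\widetilde{M\#N}$ as an iterated connected sum; the van Kampen and covering-space bookkeeping must correctly track the preimages of the connecting $3$-sphere and of the removed balls in $N$, together with the fact that $\pi_1$ of a $4$-manifold is unchanged by removing an open $4$-ball (since $\dim\geq 3$). Everything else (normal-core argument, additivity of $b_2^+$, pullback of symplectic forms to covers) is standard.
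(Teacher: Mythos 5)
Your proof is correct and follows essentially the same route as the paper: pass to a finite cover of $M\#N$, identify it as $\widetilde N \,\#\, (\text{copies of }M)$, split off one copy of $M$ so both summands have $b_2^+>0$, apply the Taubes obstruction, and pull back a hypothetical symplectic form. The only difference is that you pass to the normal core to get a regular cover, which is unnecessary (the paper uses the $r$-sheeted, not necessarily regular, cover associated to $H$ directly) but harmless.
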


\begin{proof}
As $\pi_1(N)$ has a subgroup of  finite index $r>1$, there exists a $r$-sheeted covering $\widetilde{N}\to N$. The $r$\nobreakdash-sheeted covering $E\to M\#N$ has the form
\[E=rM \# \widetilde{N}=M\# M'\]for $M'=(r-1)M\# \widetilde{N}$. Clearly, $b_2^+(M')\geq b_2^+(M) > 0$ and thus, it follows from \cite[Proposition~10.1.14]{Gompf-Stipsicz} that $E$ is not a symplectic manifold. Therefore, $M\# N$ is not a symplectic manifold either, as otherwise, the space $E$ would admit a symplectic form induced via the covering $E \to M\# N$.
\end{proof}

\begin{Example}\label{expl:hmlgy sphereCPn}
Set $M=\mathbb{C}P^2$ and let $N$ be the smooth homology $4$-sphere constructed by Sato \cite[Example~3.3]{Sato}, where $\pi_1(N)$ is isomorphic to the group $SL(2,5)$ of order $120$. As $b_2^+(\mathbb{C}P^2)=1$, it follows from Proposition \ref{prop:not_symplectic} that $M\#N$ is not a symplectic manifold.
\end{Example}

The following example justifies that for the goal of providing examples of non-symplectic smooth actions 
by using Theorem \ref{thm:main2}, the Euler characteristic condition that $\chi(F) \equiv 1 \pmod {n_G}$ 
is not restrictive at all.

\begin{Example}\label{expl:euler characteristic}
For an integer $d\geq 1$, we find a compact parallelizable smooth $(2d+1)$-manifold $F$ such that $\chi(F)=1$ and $\partial F$
contains as many connected components of the form $S^{2d}$ or $S^p\times S^q$ for $p+q=2d$, as we wish.

For an integer $g \geq 0$, consider the compact parallelizable $3$-manifold $M_g$ of the homotopy type of the wedge of $g$ circles,
whose boundary $\partial M_g$ is the orientable surface of genus $g$.

For an integer $h \geq 0$, let $M_{g,h}$ denote the manifold obtained from $M_g$ by removing $h$ disjoint open balls from
the interior of $M_g$. Then\[\chi(M_{g,h} \times D^{2d-2}) = 1 - g + h.\]

For two non-negative integers $k$ and $\ell$, let $F$ denote the disjoint union of $k$ copies of $D^{2d+1}$,
$\ell$ copies of $D^{p+1}\times S^q$, and just one copy of $M_{g,h} \times D^{2d-2}$.
Then $F$ is a compact parallelizable smooth $(2d+1)$-manifold such that\[\chi(F) = k + \ell \chi(S^q) + 1 - g + h.\]By choosing
$g$ and $h$ such that $g - h = k + \ell \chi(S^q)$, we see that $\chi(F)=1$ for the given integers $k, \ell, p, q, $ and $d$.
\end{Example}

For $d \geq 2$, $H^2(S^{2d}) = 0$ and therefore, $S^{2d}$ is not a symplectic manifold.
Even more, $S^{2d}$ does not admit an almost complex structure for $d \notin \{1,3\}$,
because $S^2$ and $S^6$ are the only spheres admitting such structures (cf. \cite{Borel-Serre}).

The products $S^2 \times S^4$, $S^2 \times S^6$, and $S^6 \times S^6$ cover the only cases where $S^p \times S^q$ admits
an almost complex structure for $p$ and $q$ both even (cf.~\cite{Datta-Subramanian}). Since $x^2=0$ for a generator $x$ of
$H^2(S^2\times S^4)$, as well as of $H^2(S^2\times S^6)$, the products $S^2\times S^4$ and $S^2\times S^6$
are not symplectic manifolds. Moreover, $S^6\times S^6$ is not a symplectic manifold, because $H^2(S^6 \times S^6)=0$.

By \cite[Example~4.30, p.~161]{Felix-Oprea-Tanre}, if $p$ and $q$ both are odd, $S^p \times S^q$ has the structure of
a complex (hence, almost complex) manifold, but for $p \geq 3$ or $q \geq 3$, it is not a symplectic manifold, because
$H^2(S^{p} \times S^{q}) = 0$.

It follows from \cite[Lemma~2.1 (resp., Lemma~2.3)]{Hajduk-Pawalowski-Tralle} that if we choose $F$ with connected components
of $\partial F$ as above, Theorem~2 yields smooth actions of $G$ which cannot preserve any symplectic (resp., almost complex)
structures on the complex projective spaces.

\section{Group actions on disks and spheres}\label{sec:grp-actions-on-D-and-S}

Let $G$ be a compact Lie group and let $V$ be a real $G$-module. For an integer $\ell \geq 1$, we set $\ell\,V = V \oplus \cdots \oplus V$, the direct sum of $\ell$ copies of $V$. 

For a $G$-space $X$, we denote by $\varepsilon^V_X$ the product $G$-vector bundle $X\times V $ over $X$. If $V =\mathbb{R}^d$ with the trivial action of $G$, we write $\varepsilon_X^d$ instead of $\varepsilon_X^V$. Also, we set $\mathscr{F}_{\text{iso}}(G;X)=\{G_x \colon x\in X\}$, where $G_x$ is the isotropy subgroup of $G$ at $x \in X$.

Any finite $G$-CW complex $X$ has a finite number of orbit types. Therefore, by \cite[Chapter~0, Theorem~5.2]{Bredon}, one can always choose a real (resp., complex) $G$-module $V$ such that $\mathscr{F}_\mathrm{iso}(G;X)\subset\mathscr{F}_\mathrm{iso}(G;V)$.

\begin{theorem}\label{thm:isotropy-subgroups}
Let $G$ be a compact Lie group and let $X$ be a $G$-CW complex with finite number of orbits types. Then there exists a real (resp., complex) $G$-module $V$ such that $\mathscr{F}_\mathrm{iso}(G;X)\subset\mathscr{F}_\mathrm{iso}(G;V)$ and $V^G = \{0\}$.
\end{theorem}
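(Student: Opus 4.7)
The plan is to obtain the required module by taking the Bredon module and discarding its $G$-fixed part. Concretely, I would start with the real (respectively complex) $G$-module $W$ furnished by \cite[Chapter~0, Theorem~5.2]{Bredon}, so that $\mathscr{F}_{\mathrm{iso}}(G;X)\subset\mathscr{F}_{\mathrm{iso}}(G;W)$. Because $G$ is compact, $W$ carries a $G$-invariant inner product (Hermitian in the complex case), and so we have a $G$-invariant orthogonal decomposition
\[
W = W^G \oplus V,
\]
where $V := (W^G)^{\perp}$ is a real (respectively complex) $G$-submodule of $W$ with $V^G = \{0\}$.

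Next I would verify that $\mathscr{F}_{\mathrm{iso}}(G;W) \subset \mathscr{F}_{\mathrm{iso}}(G;V)$, which combined with the Bredon inclusion will give the desired statement. Let $H \in \mathscr{F}_{\mathrm{iso}}(G;W)$, so $H = G_w$ for some $w \in W$. Write $w = w_1 + w_2$ with $w_1 \in W^G$ and $w_2 \in V$. Since the decomposition is $G$-invariant, any $g \in G$ satisfies $gw_1 = w_1$, and therefore
\[
G_w \;=\; \{g\in G : gw_2 = w_2\} \;=\; G_{w_2}.
\]
Hence $H = G_{w_2} \in \mathscr{F}_{\mathrm{iso}}(G;V)$, so the orbit type condition passes from $W$ to $V$.

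Finally, concatenating the two inclusions yields $\mathscr{F}_{\mathrm{iso}}(G;X) \subset \mathscr{F}_{\mathrm{iso}}(G;V)$, and by construction $V^G = \{0\}$, completing the proof. The argument is essentially formal once one has the invariant splitting at hand; there is no serious obstacle, and the only point requiring any care is the $G$-invariance of the decomposition $W = W^G \oplus V$, which is why I would appeal to compactness of $G$ and the averaged inner product rather than an arbitrary algebraic complement.
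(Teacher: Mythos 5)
Your proof is correct and follows essentially the same route as the paper, which simply invokes \cite[Chapter~0, Theorem~5.2]{Bredon} for the inclusion $\mathscr{F}_{\mathrm{iso}}(G;X)\subset\mathscr{F}_{\mathrm{iso}}(G;W)$ and leaves the normalization $V^G=\{0\}$ implicit. Your explicit step---splitting off the fixed subspace via an invariant inner product and noting $G_w=G_{w_2}$, so that no isotropy types are lost---is exactly the missing routine verification, done correctly.
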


We recall that a smooth manifold $M$ is \emph{stably parallelizable} if the tangent bundle $\tau_M$ is stably trivial, i.e., $\tau_M \oplus \varepsilon^1_M \cong \varepsilon_M^{d+1}$ for $d = \dim M$. In particular, all connected components of $M$ must have the same dimension.

In order to construct smooth group actions on disks, we make use of the equivariant thickening procedure of Pawa{\l}owski \cite[Theorems~2.4 and 3.1]{Pawalowski:1} (cf.~\cite[Section~10]{Pawalowski:2}). A special case of the procedure looks as follows.

\begin{theorem}[The equivariant thickening for stably parallelizable manifolds] \label{thm:thickening}
Let $G$ be a compact Lie group. Let $X$ be a finite, contractible $G$\nobreakdash-CW complex such that the fixed point set $F$ is a stably parallelizable smooth $d$-manifold for an integer $d \geq 0$.
Let $V$ be a real $G$-module such that $V^G = \{0\}$ and $\mathscr{F}_\mathrm{iso}(G;X)\subset\mathscr{F}_\mathrm{iso}(G;V)$. Set $n = \ell\,\dim V$ for an integer $\ell \geq 1$. 

Then, if the integer $\ell$ is sufficiently large, there exists a smooth action of $G$ on the disk $D^{n+d}$ such that the following two conclusions hold.
\begin{enumerate}
\item[$(1)$] $(D^{n+d})^G = F$ and $\mathscr{F}_\mathrm{iso}(G;D^{n+d} \smallsetminus F)
             = \mathscr{F}_\mathrm{iso}(G;\ell \,V \smallsetminus \{0\})$.
\item[$(2)$] $\nu_{F \subset D^{n+d}} \cong \varepsilon_F^{\ell\,V}$ and therefore, as real $G$-modules, $T_x(D^{n+d}) \cong \ell\,V \oplus \mathbb{R}^d$ for every point $x \in F$, where $G$ acts trivially on $\mathbb{R}^d$.
\end{enumerate}
\end{theorem}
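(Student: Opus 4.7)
The plan is to deduce the statement from the general equivariant thickening procedure of Pawa{\l}owski~\cite[Theorems~2.4 and~3.1]{Pawalowski:1}, whose hypotheses are met by the data of the theorem. Applied to $X$ and to the $G$\nobreakdash-module $\ell V$, for $\ell$ sufficiently large, this procedure produces a compact smooth $G$\nobreakdash-manifold $M$ of dimension $n+d$ which is equivariantly dominated by $X$ (and hence contractible), with $M^G = F$ and $\mathscr{F}_\mathrm{iso}(G;M\smallsetminus F) = \mathscr{F}_\mathrm{iso}(G;\ell V\smallsetminus\{0\})$. Since $\ell$ can be enlarged at will, we may assume $n+d \geq 6$, so that contractibility of $M$ combined with the $h$\nobreakdash-cobordism theorem identifies $M$ with $D^{n+d}$ as smooth manifolds; this yields conclusion~(1).

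For conclusion~(2), I would analyse the normal bundle $\nu := \nu_{F \subset M}$, a $G$\nobreakdash-vector bundle over $F$ with trivial $G$\nobreakdash-action on the base and fibre isomorphic to $\ell V$ as a real $G$\nobreakdash-module (here we use $V^G = \{0\}$). Decomposing $V \cong V_1^{a_1} \oplus \cdots \oplus V_r^{a_r}$ into its real isotypic summands and setting $D_i := \mathrm{End}_G(V_i)$, the standard structure theory of $G$\nobreakdash-vector bundles over a trivial $G$\nobreakdash-base produces a canonical splitting
\[
\nu \;\cong\; \bigoplus_{i=1}^{r} E_i \otimes_{D_i} V_i,
\]
in which each $E_i$ is an ordinary $D_i$\nobreakdash-vector bundle over $F$ of $D_i$\nobreakdash-rank $\ell a_i$. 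The claim $\nu \cong \varepsilon_F^{\ell V}$ is then equivalent to every $E_i$ being trivial.

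The main obstacle is to produce this triviality, and this is precisely the place where stable parallelizability of $F$ enters. In Pawa{\l}owski's construction, $M$ is assembled by equivariantly attaching handles modelled on invariant subspaces of $\ell V$, and the resulting normal data near $F$ is controlled by $\tau_F$ together with trivial $G$\nobreakdash-bundles pulled back from a point via the retraction $M \to F$; consequently, the hypothesis $\tau_F \oplus \varepsilon_F^1 \cong \varepsilon_F^{d+1}$ forces each $E_i$ to be stably trivial. Taking $\ell$ large enough that every $E_i$ has rank exceeding $d = \dim F$ upgrades stable triviality to honest triviality, yielding $\nu \cong \varepsilon_F^{\ell V}$. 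Combined with the splitting $T_xM = T_xF \oplus \nu_x$ at any $x \in F$, this delivers $T_xM \cong \mathbb{R}^d \oplus \ell V$ as real $G$\nobreakdash-modules with $G$ acting trivially on the $\mathbb{R}^d$\nobreakdash-summand, completing conclusion~(2).
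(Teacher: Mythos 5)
Your backbone is the paper's own: Theorem~\ref{thm:thickening} is presented in the paper with no independent proof, as a special case of Pawa{\l}owski's equivariant thickening \cite[Theorems~2.4 and 3.1]{Pawalowski:1} (cf.\ \cite[Section~10]{Pawalowski:2}), so invoking those theorems with the module $\ell V$ is exactly what is intended. The trouble is that your use of the citation understates what it delivers and then tries to recover the rest by arguments that do not stand on their own. In Pawa{\l}owski's procedure the normal data is \emph{prescribed input}, not an output to be analysed a posteriori: one feeds in the product $G$-vector bundle $\varepsilon_F^{\ell V}$, and the hypothesis to be verified --- this is precisely where stable parallelizability $\tau_F \oplus \varepsilon_F^1 \cong \varepsilon_F^{d+1}$ enters, as the paper itself remarks when it says the stably parallelizable case lets one work only with product $G$-vector bundles --- is that $\tau_F \oplus \varepsilon_F^{\ell V}$ is stably a product bundle and hence extends over the contractible complex $X$. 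The conclusion of the cited theorem then already contains the disk, the isotropy statement, and $\nu_{F \subset D^{n+d}} \cong \varepsilon_F^{\ell V}$, i.e.\ both (1) and (2) at once.

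Read as an independent derivation, your sketch has concrete gaps. First, the $h$-cobordism step requires $\partial M$ to be simply connected; contractibility of $M$ alone does not give this, and the needed connectivity of the boundary (coming from a codimension estimate on the singular set for $\ell$ large) is part of what the cited proof establishes, not something you may assume. Second, $V^G=\{0\}$ only tells you the fibre of $\nu$ at a fixed point contains no trivial summand; it does not identify that fibre with $\ell V$ --- that identification is again part of the construction, because the bundle is prescribed. Third, the key assertion that each isotypic piece $E_i$ of the output normal bundle is stably trivial because the handles are modelled on invariant subspaces of $\ell V$ is exactly the nontrivial content one would have to prove; as written it is an appeal to the construction you have not examined, and stable parallelizability of $F$ is never actually brought to bear in a checkable way (your rank argument upgrading stable to genuine triviality is fine, but it has nothing to feed on until stable triviality is established). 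The clean repair is simply to quote \cite[Theorems~2.4 and 3.1]{Pawalowski:1} in the form in which they are stated, with $\varepsilon_F^{\ell V}$ as the prescribed bundle and stable parallelizability used to verify their hypotheses; that is the paper's route and it closes all three gaps at once.
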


Now, we can see that a stably parallelizable compact smooth manifold $F$ occurs as the fixed point set of a smooth action of $G$ on a disk if and only if $F$ occurs as the fixed point set in a finite contractible $G$-CW complex.

\begin{corollary}\label{cor:manifold-realization}
Let $G$ be a compact Lie group. Let $F$ be a stably parallelizable, compact smooth manifold. Then the following two conditions are equivalent.
\begin{itemize}
\item[$(1)$] There exists a finite contractible $G$-CW complex $X$ with $X^G = F$.
\item[$(2)$] There exists a smooth action of $G$ on a disk $D$ with $D^G = F$.
\end{itemize}
\end{corollary}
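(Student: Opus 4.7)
The plan is to exhibit both implications as essentially immediate consequences of material already set up in the paper, with the substantive direction $(1)\Rightarrow(2)$ being a direct application of the equivariant thickening theorem, and the reverse direction $(2)\Rightarrow(1)$ following from the standard fact that a compact smooth $G$-manifold admits an equivariant CW structure.

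For $(2)\Rightarrow(1)$, I would simply take $X = D$. A smooth action of a compact Lie group on a compact smooth manifold can be equivariantly triangulated, giving $D$ the structure of a finite $G$-CW complex; since $D$ is a disk it is contractible, and by construction $X^G = D^G = F$. No stable parallelizability is needed in this direction.

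For $(1)\Rightarrow(2)$, the given finite contractible $G$-CW complex $X$ satisfies $X^G = F$, and $F$ is assumed to be a stably parallelizable compact smooth $d$-manifold. Since $X$ is a finite $G$-CW complex, it has only finitely many orbit types, so Theorem~\ref{thm:isotropy-subgroups} supplies a real $G$-module $V$ with $V^G = \{0\}$ and $\mathscr{F}_{\mathrm{iso}}(G;X)\subset\mathscr{F}_{\mathrm{iso}}(G;V)$. All the hypotheses of Theorem~\ref{thm:thickening} are therefore in place, and applying it with $\ell$ sufficiently large produces a smooth action of $G$ on a disk $D^{n+d}$ (where $n = \ell\,\dim V$) whose fixed point set equals $F$. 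Setting $D = D^{n+d}$ yields $(2)$.

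There is no genuine obstacle: the only nontrivial ingredient is Theorem~\ref{thm:thickening}, which does all the work in the direction $(1)\Rightarrow(2)$, while the converse is handled by invoking equivariant triangulability. The one point to be explicit about is that in the $(2)\Rightarrow(1)$ direction we do not need $F$ to be stably parallelizable, so the corollary is in effect a characterization valid under the standing hypothesis on $F$ in the $(1)\Rightarrow(2)$ direction alone.
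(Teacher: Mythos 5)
Your proposal is correct and follows essentially the same route as the paper: the paper also deduces $(1)\Rightarrow(2)$ by combining Theorem~\ref{thm:isotropy-subgroups} with Theorem~\ref{thm:thickening}, and $(2)\Rightarrow(1)$ from the fact that a compact smooth $G$-manifold carries a finite $G$-CW structure. Your version merely spells out the same two steps in more detail.
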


\begin{proof}
By Theorems~\ref{thm:thickening} and \ref{thm:isotropy-subgroups}, (1) implies (2). Also, as any compact smooth $G$\nobreakdash-manifold has the structure of a finite $G$-CW complex, (2) implies~(1).
\end{proof}


\begin{proposition}\label{pro:G-CW-complex}
Let $G$ be a compact Lie group and let $F$ be a compact stably parallelizable
smooth manifold. Then under either of the five conditions:
\begin{itemize}
\item[$(1)$] $G$ is arbitrary and $F$ is contractible, and thus $\chi(F) = 1$,
\item[$(2)$] $G$ is connected and $F$ is $\mathbb{Z}$-acyclic, and thus $\chi(F) = 1$,
\item[$(3)$] $G/G_0$ is a $p$-group and $F$ is $\mathbb{Z}_p$-acyclic,
             and thus $\chi(F) = 1$,
\item[$(4)$] $G/G_0$ is not of prime power order and $\chi(F)\equiv 1 \pmod {n_G}$,
\item[$(5)$] $G_0$ is non-abelian and $\chi(F)$ is arbitrary,
\end{itemize}
there exists a finite contractible $G$-CW complex $X$ with $X^G = F$.
\end{proposition}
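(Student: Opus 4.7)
The plan is to treat the five conditions separately, reducing each to an existing realization result; no uniform construction is needed.

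Case~(1) is immediate: endow $F$ with the trivial $G$-action so that $X := F$ is itself a finite contractible $G$-CW complex with $X^G = F$.

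For cases~(4) and~(5), I would appeal directly to Oliver's realization theorem in the form recalled in the introduction. In case~(4) the hypothesis $\chi(F) \equiv 1 \pmod{n_G}$ is exactly the stated sole obstruction, so a finite contractible $G$-CW complex $X$ with $X^G = F$ exists by \cite{Oliver:1, Oliver:2}. In case~(5) the hypothesis that $G_0$ is non-abelian forces $n_G = 1$, so the Euler-characteristic congruence is vacuous and the same theorem gives the realization for any finite $F$.

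Cases~(2) and~(3) fall outside the range of Oliver's theorem—here $G$ is connected (hence a torus, after reducing to case~(5) in the non-abelian situation) or $G/G_0$ is a $p$-group—and Smith theory forces the $\mathbb{Z}$- or $\mathbb{Z}_p$-acyclicity of $F$. I would invoke the converse to Smith theory due to Jones and Oliver: for $G$ a torus acting on a finite contractible complex, every finite $\mathbb{Z}$-acyclic CW complex is realizable as a fixed set, and for $G/G_0$ a $p$-group the analogous statement holds with $\mathbb{Z}_p$-coefficients. These are again in \cite{Oliver:1, Oliver:2}.

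The common technical core of the non-trivial cases is the inductive construction where one starts from $F$, attaches free $G$-orbits of equivariant cells to kill homotopy groups, and uses the acyclicity or Euler-characteristic hypothesis to guarantee that the process terminates with a contractible complex while leaving $X^G = F$ untouched. The main obstacle—controlling the attaching maps so that the equivariant obstructions vanish—is precisely what the Oliver number $n_G$ measures, and since this analysis is already carried out in the cited papers of Oliver, the present proposition is obtained by collating their conclusions for each type of compact Lie group $G$.
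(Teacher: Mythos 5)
Your treatment of cases (3)--(5) follows the same route as the paper (Jones for the $p$-group case, Oliver for the remaining ones), but the abelian subcase of (2) has a genuine gap. After reducing to $G$ a torus you invoke a ``converse to Smith theory for tori'' and place it in \cite{Jones}, \cite{Oliver:1}, \cite{Oliver:2}; none of these references covers a positive-dimensional torus. Jones treats finite cyclic groups of prime power order, \cite{Oliver:1} treats finite groups \emph{not} of prime power order, and \cite{Oliver:2} concerns fixed-point-free actions on disks; a torus lies exactly in the class ($G_0$ abelian, $G/G_0$ of prime power order) excluded from the Oliver-number framework recalled in the introduction. So as written the torus subcase rests on an unsupported citation. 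The missing argument is short and is precisely what the paper uses: set $X = G \ast F$ with the join action, $G$ acting by left translations on the first factor and trivially on $F$. Then $X$ is a finite $G$-CW complex with $X^G = F$; it is simply connected because $G$ is connected and $F \neq \varnothing$, and it is $\mathbb{Z}$-acyclic because $F$ is, hence contractible by Hurewicz--Whitehead. The same join handles case (1) for arbitrary $G$ (contractible $F$ gives contractible $G \ast F$), and the paper's case (5) is the variant $X = Y \ast F$, where $Y$ is a finite contractible $G$-CW complex with $Y^G = \varnothing$ supplied by \cite{Oliver:2}.

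Two smaller points. Your case (1) shortcut (take $X = F$ with the trivial action) does satisfy the statement as literally formulated, so it is not wrong, only degenerate compared with the paper's join, which produces an action with nontrivial isotropy structure that can moreover be made effective. In cases (3) and (4), the theorems of Jones and Oliver produce a finite contractible $G/G_0$-CW complex with fixed point set $F$; to get the conclusion for the compact Lie group $G$ itself one must regard this complex as a $G$-CW complex via the projection $G \to G/G_0$ (and, if effectiveness is wanted, multiply by a disk with a linear effective $G$-action), a step the paper makes explicit and your write-up glosses over; also, the correct citation for (3) is \cite{Jones}, not \cite{Oliver:1} or \cite{Oliver:2}. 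With the join argument supplied for (1)/(2) and the pullback along $G \to G/G_0$ made explicit, your proof coincides with the paper's.
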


\begin{proof}
In the case (1), recall that any smooth contractible (more generally, $\mathbb{Z}$-acyclic) manifold is stably parallelizable. Since $F$ is contractible, so is the join $X = G \ast F$.
Clearly, $X$ with the join action of $G$ is a finite $G$-CW complex with $X^G = F$ (cf. \cite[Theorem~4.2, p. 282]{Pawalowski:1}).

In the case (2), $G$ is connected and $F$ is $\mathbb{Z}$-acyclic, and thus $X = G \ast F$ is simply connected and $\mathbb{Z}$-acyclic. Therefore, $X$ is contractible.

In the case (3), 
according to Jones~\cite{Jones}, there exists a finite contractible $G$-CW complex $X$
such that $X^{G/G_0} = F$.

In the case (4), $G/G_0$ is not of prime power order and $\chi(F)\equiv 1 \pmod {n_G}$.
Then, according to Oliver \cite{Oliver:1}, there exists a finite contractible $G/G_0$-CW
complex $X$ such that $X^{G/G_0} = F$.

In the cases (3) and (4), the epimorphism $G \to G/G_0$ allows to consider $X$ as a finite $G$-CW complex with $X^G = F$. By replacing $X$ by the product of $X$ and a disk with a linear effective action of $G$ fixing only the origin of the disk, one may assume that the action of $G$ on $X$ is effective.

In the case (5), $G_0$ is non-abelian and $\chi(F)$ is arbitrary. Then, according to Oliver \cite{Oliver:2}, there exists a finite contractible $G$-CW complex $Y$ such that $Y^G = \varnothing$.
Therefore, $X = Y \ast F$ is a finite contractible $G$-CW complex such that the fixed point set $X^G = F$.
\end{proof}


\section{Group actions on complex projective spaces}\label{sec:grp-actions-on-CPn}

Henceforth, $\mathbb{C}^n_{\varrho}$ (resp., $\mathbb{R}^n_{\varrho}$) denotes the complex
(resp., real) vector space $\mathbb{C}^n$ (resp., $\mathbb{R}^n$) with the linear action of
$G$ given by a unitary (resp., orthogonal) representation $\varrho \colon G \to U(n)$
(resp., $\varrho \colon G \to O(n)$).

\begin{lemma}\label{lem:module-realization}
Let $G$ be a compact Lie group and let $\varrho \colon G \to U(n)$ be a unitary representation
without trivial summand. Then, for any integer $d \geq 0$, there exists a smooth action
of $G$ on the complex projective space $\mathbb{C}P^{n+d}$ such that the fixed point set is
diffeomorphic to the disjoint union of the space $\mathbb{C}P^d$ and a~number of complex projective spaces. Moreover, for any $x \in \mathbb{C}P^d$,
\[T_x(\mathbb{C}P^{n+d})\cong \mathbb{C}^{n}_{\varrho} \oplus \mathbb{C}^{d}\]
as complex $G$-modules, where $G$ acts trivially on $\mathbb{C}^{d}$.
\end{lemma}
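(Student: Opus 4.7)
The plan is to exhibit the action concretely as the projectivization of a suitable linear unitary action on $\mathbb{C}^{n+d+1}$, and then read off both the fixed point set and the tangent representation from the weight space decomposition.

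More precisely, I would let $V = \mathbb{C}^n_\varrho \oplus \mathbb{C}^{d+1}$, where $G$ acts on the second summand trivially. Since the resulting action $G \to U(n+d+1)$ is by unitary transformations, it descends to a smooth $G$-action on $\mathbb{C}P^{n+d} = \mathbb{C}P(V)$. For the fixed point set, a line $L \in \mathbb{C}P(V)$ is $G$-fixed if and only if $L$ is a one-dimensional $G$-invariant subspace of $V$, i.e., a subrepresentation affording some one-dimensional character $\chi$ of $G$. Decomposing $V = \bigoplus_\chi V_\chi$ into isotypical components indexed by the irreducible characters of $G$, one obtains
\[
\mathbb{C}P(V)^G = \bigsqcup_{\chi \text{ one-dimensional}} \mathbb{C}P(V_\chi),
\]
a disjoint union of complex projective spaces. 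Because $\varrho$ has no trivial summand, the isotypical component for the trivial character is exactly the $\mathbb{C}^{d+1}$ summand, contributing the component $\mathbb{C}P^d$; the remaining one-dimensional characters contribute the ``number of complex projective spaces'' mentioned in the statement.

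For the tangent module at a point $x = [v] \in \mathbb{C}P^d$, I would use the standard $G$-equivariant identification
\[
T_x\mathbb{C}P(V) \;\cong\; \mathrm{Hom}_{\mathbb{C}}(L, V/L),
\]
where $L = \mathbb{C}\,v \subset \mathbb{C}^{d+1}$. Since $L$ sits inside the trivial summand, it carries the trivial $G$-action, so $L^*$ is trivial as well, and $V/L \cong \mathbb{C}^n_\varrho \oplus \mathbb{C}^d$ as complex $G$-modules. Consequently,
\[
T_x\mathbb{C}P^{n+d} \;\cong\; L^* \otimes_{\mathbb{C}} (V/L) \;\cong\; \mathbb{C}^n_\varrho \oplus \mathbb{C}^d,
\]
as required.

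The only subtle point is the identification of the fixed point set: one needs to verify that no line inside a higher-dimensional irreducible isotypical component of $V$ can be $G$-invariant (which follows immediately, since such a line would itself be a one-dimensional subrepresentation, contradicting irreducibility of the containing summand), and that the hypothesis ``$\varrho$ has no trivial summand'' is precisely what pins the trivial-character isotypical component down to $\mathbb{C}^{d+1}$, so that the corresponding component of the fixed set is $\mathbb{C}P^d$ and not something larger. Everything else is routine linear representation theory.
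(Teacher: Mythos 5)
Your proposal is correct and takes essentially the same approach as the paper: both arguments realize the action as the projectivization of the linear $G$-action on $V=\mathbb{C}^n_{\varrho}\oplus\mathbb{C}^{d+1}$ and then read off the fixed set and the tangent module. The only differences are cosmetic: the paper computes $T_x(\mathbb{C}P^{n+d})$ by differentiating the (linear) action in the affine chart around $[0:\cdots:0:1]$, while you use the equivariant identification $T_{[v]}\mathbb{C}P(V)\cong\mathrm{Hom}_{\mathbb{C}}(L,V/L)$, and your isotypical-component description of $\mathbb{C}P(V)^G$ is in fact more explicit than the paper's brief assertion.
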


\begin{proof}
The action $\alpha \colon G \times \mathbb{C}^{n+d+1} \to \mathbb{C}^{n+d+1}$ given by
\[g \cdot (z_1,\ldots, z_{n+d+1}) = (\varrho(g)(z_1, \ldots, z_n), z_{n+1}, \ldots, z_{n+d+1})\]
induces a smooth action
$\overline{\alpha} \colon G \times \mathbb{C}P^{n+d} \to \mathbb{C}P^{n+d}$
via the diagram:
\begin{figure}[h]
\begin{center}
\begin{tikzpicture}[scale=0.9]
\draw (0,0) node(GxCP^n) {$G\times \mathbb{C}P^{n+d}$};
\draw (6,0) node(CP^n) {$\mathbb{C}P^{n+d}$};
\draw (0,2) node(GxC^n+1) {$G\times (\mathbb{C}^{n+d+1}\smallsetminus \{0,\ldots,0\}$)};
\draw (6,2) node(C^n+1) {$\mathbb{C}^{n+d+1}\smallsetminus\{0,\ldots,0\}$};
\path [->] (GxC^n+1) edge node [above] {$\alpha$} (C^n+1);
\path [->] (GxC^n+1) edge node [left] {${\operatorname{id}} \times \pi$} (GxCP^n);
\path [->] (GxCP^n)  edge node [above] {$\overline{\alpha}$} (CP^n);
\path [->] (C^n+1) edge node [right]{$\pi$} (CP^n);
\end{tikzpicture}
\end{center}
\end{figure}

\noindent
where $\pi$ is the quotient map.
It follows that $(\mathbb{C}P^{n+d})^G$ consists of
\[\mathbb{C}P^d = \{ [0,\ldots, 0, z_{n+1}, \ldots, z_{n+d+1}] \in \mathbb{C}P^{n+d}\}\]
and a number of complex projective spaces.

Consider the point $x=[0,\ldots, 0,1]$ in $(\mathbb{C}P^{n+d})^G$ together with its open neighbourhood
\[U = \{[z_1,\ldots, z_{n+d}, z_{n+d+1}] \colon z_{n+d+1} \neq 0\}\]
in $\mathbb{C}P^{n+d}$. The map $h:U\to\mathbb{C}^{n+d}$ defined by
\[h([z_1,\ldots, z_{n+d}, z_{n+d+1}]) =
\left(\frac{z_1 \texttt{}}{z_{n+d+1}},\ldots, \frac{z_{n+d}}{z_{n+d+1}}\right)\]
is a homeomorphism with the inverse $h^{-1}(z_1, \ldots,z_{n+d})=[z_1,\ldots,z_{n+d}, 1]$.
For any $g \in G$, let $\theta_g \colon \mathbb{C}^{n+d} \to \mathbb{C}^{n+d}$
and $\overline{\theta}_g \colon \mathbb{C}P^{n+d} \to \mathbb{C}P^{n+d}$ be given by
\[\theta_g(z_1, \ldots, z_{n+d}) = (\varrho(g)(z_1, \ldots, z_n), z_{n+1}, \ldots, z_{n+d}) \ \text{and} \]
\[\overline{\theta}_g([z_1, \ldots, z_{n+d+1}]) = [\varrho(g)(z_1, \ldots, z_n),
z_{n+1}, \ldots, z_{n+d+1}]\text{.}\]
Then $h \circ \overline{\theta}_g \circ h^{-1} = \theta_g
\colon \mathbb{C}^{n+d} \to \mathbb{C}^{n+d}$ and therefore,
\[D_{h(x)}(h \circ \overline{\theta}_g \circ h^{-1}) = D_{h(x)}(\theta_g) = \theta_g,\]
proving that as complex $G$-modules,
\[T_x(\mathbb{C}P^{n+d}) \cong \mathbb{C}^n_{\varrho} \oplus \mathbb{C}^d\]
where $G$ acts trivially on $\mathbb{C}^d$.
As $\mathbb{C}P^d$ occurs a connected component of the fixed point set $(\mathbb{C}P^{n+d})^G$,
the $G$-module $T_x(\mathbb{C}P^{n+d})$ does not depend on the choice of
$x \in \mathbb{C}P^d$, completing the proof.
\end{proof}

\begin{lemma}\label{lem:manifold-realization}
Let $G$ be a compact Lie group acting smoothly on a sphere $S^{2n+2d}$  for $n\geq 1$ and $d \geq 0$, with fixed point set $N$ of dimension $2d$. Suppose that for a point $y\in N$, as real $G$-modules, \[ T_y(S^{2n+2d}) \cong \mathbb{R}^{2n}_{\varrho} \oplus \mathbb{R}^{2d}\] where  $\varrho \colon G \to U(n) \subset O(2n)$ is a representation without trivial summand and $G$ acts trivially on $\mathbb{R}^{2d}$. Then there exists a smooth action of $G$ on the space $\mathbb{C}P^{n+d}$ such that the fixed point set is diffeomorphic to the disjoint union of\/ $\mathbb{C}P^d \# N$ and a number of complex projective spaces.

\end{lemma}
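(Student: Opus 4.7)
The plan is to realize the required action on $\mathbb{C}P^{n+d}$ as an equivariant connected sum of the given sphere $S^{2n+2d}$ with a copy of $\mathbb{C}P^{n+d}$ carrying a linear $G$-action. First I would apply Lemma~\ref{lem:module-realization} to the representation $\varrho$ to obtain a smooth action of $G$ on $\mathbb{C}P^{n+d}$ whose fixed point set contains a component diffeomorphic to $\mathbb{C}P^d$, and such that at any point $x\in\mathbb{C}P^d$ the tangent $G$-module is $T_x\mathbb{C}P^{n+d}\cong\mathbb{C}^n_\varrho\oplus\mathbb{C}^d$. Viewed as real $G$-modules, $T_x\mathbb{C}P^{n+d}\cong\mathbb{R}^{2n}_\varrho\oplus\mathbb{R}^{2d}\cong T_y S^{2n+2d}$, so the two fixed points match equivariantly in their tangent representations.

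Next I would form the equivariant connected sum of $\mathbb{C}P^{n+d}$ and $S^{2n+2d}$ at $x$ and $y$. By the equivariant differentiable slice theorem, $G$-invariant tubular neighborhoods of $x$ and $y$ are $G$-diffeomorphic to open balls in $\mathbb{R}^{2n}_\varrho\oplus\mathbb{R}^{2d}$; removing small closed equivariant disks from each and identifying the two boundary spheres via a suitable $G$-equivariant diffeomorphism of $S(\mathbb{R}^{2n}_\varrho\oplus\mathbb{R}^{2d})$ yields a smooth $G$-manifold $M$. Since this construction underlies the usual connected sum $\mathbb{C}P^{n+d}\#S^{2n+2d}$ and connected sum with the standard sphere is trivial, $M$ is diffeomorphic to $\mathbb{C}P^{n+d}$ as a smooth manifold.

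It remains to identify the fixed point set. Since $\varrho$ has no trivial summand, $(\mathbb{R}^{2n}_\varrho\oplus\mathbb{R}^{2d})^G=\{0\}\oplus\mathbb{R}^{2d}$, so the excised $G$-disks remove exactly a $2d$-disk from the fixed component $\mathbb{C}P^d\subset(\mathbb{C}P^{n+d})^G$ and a $2d$-disk from the component $N_0\subset N$ that contains $y$; the gluing of the boundary $S^{2d-1}$'s produces the ordinary connected sum $\mathbb{C}P^d\#N_0$ as one fixed component, while the other components of $N$ and the remaining complex projective space components coming from Lemma~\ref{lem:module-realization} are untouched. The main technical point is that the identification of boundary spheres must be simultaneously $G$-equivariant and orientation-reversing on both the ambient $S^{2n+2d-1}$ and the fixed $S^{2d-1}$, so that the fixed-point-level gluing realises a genuine connected sum; this can be arranged by composing the identity on $\mathbb{R}^{2n}_\varrho\oplus\mathbb{R}^{2d}$ with an equivariant reflection that fixes $\mathbb{R}^{2n}_\varrho$ and reverses a single coordinate of the trivial factor $\mathbb{R}^{2d}$.
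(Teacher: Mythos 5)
Your proposal is correct and follows essentially the same route as the paper: apply Lemma~\ref{lem:module-realization} to $\varrho$, note that $T_x(\mathbb{C}P^{n+d})\cong\mathbb{R}^{2n}_{\varrho}\oplus\mathbb{R}^{2d}\cong T_y(S^{2n+2d})$ as real $G$-modules, and use the Slice Theorem to form the equivariant connected sum $\mathbb{C}P^{n+d}\#_{x,y}S^{2n+2d}\cong\mathbb{C}P^{n+d}$, whose fixed point set is $(\mathbb{C}P^d\# N)\sqcup\mathbb{C}P^{d_1}\sqcup\cdots\sqcup\mathbb{C}P^{d_k}$. Your additional remarks on the fixed part of the slice representation and the equivariant orientation-reversing gluing only flesh out details the paper leaves implicit.
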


\begin{proof}
The representation $\varrho \colon G \to U(n)$ defines a linear action of $G$
on $\mathbb{C}^n$ without trivial summand. According to Lemma~\ref{lem:module-realization},
there exists a smooth action of $G$ on $\mathbb{C}P^{n+d}$ such that the fixed point set
is diffeomorphic to
\[\mathbb{C}P^d \sqcup \mathbb{C}P^{d_1} \sqcup \cdots \sqcup \mathbb{C}P^{d_k}\]
for some integers $d_1, \ldots, d_k \geq 0$. Moreover, as real $G$-modules, \[T_x(\mathbb{C}P^{n+d})\cong \mathbb{R}^{2n}_{\varrho} \oplus \mathbb{R}^{2d} \cong T_y(S^{2n+2d})\]for any $x \in \mathbb{C}P^d$.
So, the Slice Theorem allows us to identify some invariant disk neighbourhoods centred at
$x$ and $y$ in $\mathbb{C}P^{n+d}$ and $S^{2n+2d}$\!, respectively. Therefore, we can form
the connected sum
\[\mathbb{C}P^{n+d} \#_{x,y} S^{2n+2d}\cong \mathbb{C}P^{n+d}\!\,\]
upon which $G$ acts smoothly with fixed point set diffeomorphic to \[(\mathbb{C}P^d \# N) \sqcup \mathbb{C}P^{d_1} \sqcup \cdots \sqcup \mathbb{C}P^{d_k}\!\textrm{,}\]
completing the proof.
\end{proof}

\begin{proposition}\label{pro:manifold-realization}
Let $G$ be a compact Lie group and let $F$ be a compact stably parallelizable smooth $(2d+1)$\nobreakdash-manifold for $d\geq 1$, with non-empty boundary. If $F$ is the fixed point set of a finite contractible $G$-CW complex, then there exists a smooth action of $G$ on a complex projective space $\mathbb{C}P^{n+d}$ such that the fixed point set is diffeomorphic to the disjoint union of the connected sum $\mathbb{C}P^d \#\partial F$ and a number of complex projective spaces.
\end{proposition}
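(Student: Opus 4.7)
The plan is to combine the equivariant thickening of Theorem~\ref{thm:thickening} with the connected-sum construction in Lemma~\ref{lem:manifold-realization}. Starting from a finite contractible $G$-CW complex $X$ with $X^G = F$ (which is hypothesized to exist), I intend to realize $F$ as the fixed point set of a smooth $G$-action on a disk whose equivariant normal bundle at $F$ carries a canonical complex structure, and then use the bounding sphere to perform an equivariant connected sum into a linear $\mathbb{C}P^{n+d}$.

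First, I would invoke Theorem~\ref{thm:isotropy-subgroups} to choose a \emph{complex} $G$-module $V$ with $V^G = \{0\}$ and $\mathscr{F}_{\mathrm{iso}}(G;X)\subseteq\mathscr{F}_{\mathrm{iso}}(G;V)$. Since $F$ is stably parallelizable, Theorem~\ref{thm:thickening} then yields, for some sufficiently large $\ell\geq 1$, a smooth $G$-action on a disk $D$ with $D^{G}=F$ and $\nu_{F\subset D}\cong\varepsilon_{F}^{\ell V}$. Setting $n=\ell\dim_{\mathbb{C}}V$, the action on $\ell V$ is described by some representation $\varrho\colon G\to U(n)$ without trivial summand, and $\dim D = 2n+2d+1$.

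Next, I would restrict the action to the boundary sphere $S=\partial D\cong S^{2n+2d}$. Since $F$ is properly embedded in $D$ with $\partial F = F\cap\partial D$, the fixed point set of the restricted action on $S$ equals $\partial F$, a $2d$-manifold. At any $y\in\partial F$, the normal bundle of $F$ in $D$ restricts to the normal bundle of $\partial F$ in $S$, and therefore as real $G$-modules
\[
T_{y}S \;\cong\; T_{y}(\partial F)\oplus \mathbb{R}^{2n}_{\varrho} \;\cong\; \mathbb{R}^{2d}\oplus \mathbb{R}^{2n}_{\varrho},
\]
with $G$ acting trivially on the $\mathbb{R}^{2d}$ summand. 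This is precisely the hypothesis of Lemma~\ref{lem:manifold-realization}, whose application then delivers a smooth $G$-action on $\mathbb{C}P^{n+d}$ whose fixed point set is diffeomorphic to the disjoint union of $\mathbb{C}P^{d}\#\partial F$ and a number of complex projective spaces, completing the argument.

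The technical point I expect to be most delicate is the boundary analysis in the second step: Theorem~\ref{thm:thickening} records the splitting $\nu_{F\subset D}\cong\varepsilon_{F}^{\ell V}$ on the closed fixed set, so one needs the thickening to produce an honest proper embedding $F\hookrightarrow D$ with a $G$-invariant bicollar near $\partial D$, so that the normal identification restricts equivariantly to $\nu_{\partial F\subset S}\cong\varepsilon_{\partial F}^{\ell V}$ and, in particular, the complex structure on the normal bundle is preserved. Once this compatibility at the boundary is absorbed, the passage through Lemma~\ref{lem:manifold-realization} is essentially automatic.
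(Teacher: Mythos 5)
Your proposal follows exactly the paper's argument: choose a complex $G$-module $V$ via Theorem~\ref{thm:isotropy-subgroups}, apply the equivariant thickening (Theorem~\ref{thm:thickening}) to realize $F$ as the fixed point set on a disk $D^{2n+2d+1}$ with normal bundle $\varepsilon_F^{\ell V}$, restrict to the boundary sphere to get $\partial F$ as the fixed set with $T_y(S^{2n+2d})\cong\mathbb{R}^{2n}_{\varrho}\oplus\mathbb{R}^{2d}$, and finish with Lemma~\ref{lem:manifold-realization}. The boundary-compatibility point you flag as delicate is precisely what the thickening construction provides (proper embedding of $F$ with $\partial F\subset\partial D$), and the paper asserts it in the same way, so your proof is correct and essentially identical to the paper's.
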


\begin{proof}
By Theorem~\ref{thm:isotropy-subgroups}, there exists a complex $G$-module $V$ such that $V^G = 0$ and $\mathscr{F}_\mathrm{iso}(G;X)\subset\mathscr{F}_\mathrm{iso}(G;V)$.
For an integer $\ell \geq 1$, let $\ell\,V = V \oplus \cdots \oplus V$, the direct sum of $\ell$ copies of $V$. Set $n = \frac{1}{2} \ell \dim_{\mathbb{R}} V$. Then $\ell\,V  = \mathbb{R}^{2n}_{\varrho}$, where $\varrho \colon G \to U(n) \subset O(2n)$ is a representation without trivial summand.

By Theorem~\ref{thm:thickening}, if $\ell$ is sufficiently large, then there exists a smooth action of $G$ on the disk $D^{2n+2d+1}$ with fixed point set diffeomorphic to $F$. Moreover, for any point $y \in F$, as real $G$-modules, \[T_y(D^{2n+2d+1}) \cong \mathbb{R}^{2n}_{\varrho} \oplus \mathbb{R}^{2d+1}\!\textrm{,}\] where $G$ acts trivially on $\mathbb{R}^{2d+1}$. The action of $G$ on the disk $D^{2n+2d+1}$ restricts to a smooth action of $G$ on $S^{2n+2d}$ such that the fixed point set is diffeomorphic to $\partial F$. Moreover, as real $G$-modules, \[T_y(S^{2n+2d}) \cong \mathbb{R}^{2n}_{\varrho} \oplus \mathbb{R}^{2d}\!\]for any point $y \in \partial F$. Hence, by Lemma~\ref{lem:manifold-realization}, there exists a smooth action of $G$ on $\mathbb{C}P^{n+d}$ such that the fixed point set is diffeomorphic to the disjoint union of $\mathbb{C}P^d \# \partial F$ and a number of complex projective spaces.
\end{proof}

\bigskip\noindent
{\sl{Proofs of Theorems~\ref{thm:main1}} and\/ \ref{thm:main2}.}
Assume as in Theorem~\ref{thm:main1}, that $G$ is a compact Lie group and $F$ is a compact $\mathbb{Z}$\nobreakdash-acyclic smooth $(2d+1)$\nobreakdash-manifold for $d\geq 1$,
or as in Theorem~\ref{thm:main2}, that $G$ is a compact Lie group such that $G_0$ is non-abelian or $G/G_0$ is not of prime power order, and $F$ is a compact stably parallelizable
smooth $(2d+1)$\nobreakdash-manifold for $d \geq 1$, with $\chi(F) \equiv 1 \pmod {n_G}$.

In both cases, Proposition~\ref{pro:G-CW-complex} asserts that there exists a finite contractible $G$-CW complex $X$ such that $X^G=F$. Therefore, in both cases, it follows 
from Proposition~\ref{pro:manifold-realization} that there exists a smooth action of $G$ on a complex projective space such that the fixed point set is diffeomorphic to 
the disjoint union of $\mathbb{C}P^{d} \#\partial F$ and a number of complex projective spaces. \hfill$\square$

\thebibliography{100}

\bibitem{Audin}
Audin, M.,
\textit{The Topology of Torus Actions on Symplectic Manifolds},
Progress in Math., Vol. \textbf{93}, Birkh{\"a}user, 1991.

\smallskip
\bibitem{Borel-Serre}
Borel, A., Serre, J.-P.,
\textit{Groupes de Lie et puissances r{\'e}duites de Steenrod},
Amer. J. Math. \textbf{75} (1953), 409--448.

\smallskip
\bibitem{Bredon}
Bredon, G.\,E.,
\textit{Introduction to compact transformation groups},
Pure and Applied Mathematics, Vol. \textbf{46}, Academic Press, New York, 1972.

\smallskip
\bibitem{Datta-Subramanian}
Datta, B., Subramanian, S.,
\textit{Non-existence of almost complex structures on products
of even-dimensional spheres},
Topology Appl. \textbf{36} (1990), 39--42.

\smallskip
\bibitem{tomDieck}
tom Dieck, T.,
\textit{Transformation Groups},
de Gruyter Studies in Math. \textbf{8}, Walter de Gruyter, 1987.

\smallskip
\bibitem{Felix-Oprea-Tanre}
F{\'e}lix, Y., Oprea, J., Tanr{\'e}, D.,
\textit{Algebraic Models in Geometry},
Oxford Graduate Texts in Math., Vol. \textbf{17},
Oxford University Press, 2008.

\smallskip
\bibitem{Gompf-Stipsicz}
Gompf, E., Stipsicz, A.,
\textit{$4$-manifolds and Kirby calculus},
AMS Graduate Text in Math., Vol. \textbf{20}, 1999.


\smallskip
\bibitem{Hajduk-Pawalowski-Tralle}
Hajduk, B., Pawa{\l}owski, K., and Tralle, A.,
\textit{Non-symplectic smooth circle actions on symplectic manifolds},
arXiv.org.math. 1001.0680 (to appear in \textit{Math. Slovaca}).


\smallskip
\bibitem{Jones}
Jones, L.,
\textit{The converse to the fixed point theorem of P.\,A. Smith: I},
Annals of Math. \textbf{94} (1971), 52--68.

\smallskip
\bibitem{Kawakubo}
Kawakubo, K.,
\textit{The Theory of Transformation Groups},
Oxford University Press, Oxford, 1991.

\smallskip
\bibitem{Kervaire}
Kervaire, M.,
\textit{Smooth homology spheres and their fundamental groups},
Trans. Amer. Math. Soc. \textbf{144} (1969),  67--72.

\smallskip
\bibitem{Kotschick-Morgan-Taubes}
Kotschick, D., Morgan, J.\,W., Taubes, C.\,H.,
\textit{Four-manifolds without symplectic structures but with non-trivial
Seiberg--Witten invariants},
Math. Research Letters \textbf{2} (1995), 119--124.

\smallskip
\bibitem{Oliver:1}
Oliver, R.,
\textit{Fixed-point sets of group actions on finite acyclic complexes},
Commentarii Mathematici Helvetici, \textbf{50} (1975), 155--177.

\smallskip
\bibitem{Oliver:2}
Oliver, R.,
\textit{Smooth compact Lie group actions on disks},
Mathematische Zeitschrift \textbf{149} (1976), 79--96.

\bibitem{Oliver:3}
Oliver, R.,
\textit{$G$-actions on disks and permutation representations II},
Math. Z. \textbf{157} (1977), 237–263.

\smallskip
\bibitem{Oliver:4} Oliver, B.,
\textit{Fixed point sets and tangent bundles of actions on disks and Euclidean spaces},
Topology \textbf{35} (1996), 583--615.


\smallskip
\bibitem{Pawalowski:1}
Pawa{\l}owski, K.,
\textit{Fixed point sets of smooth group actions on disks and Euclidean spaces},
Topology \textbf{28} (1989), 273--289; Corrections: ibid. \textbf{35} (1996), 749--750.

\smallskip
\bibitem{Pawalowski:2}
Pawa{\l}owski, K.,
\textit{Manifolds as the fixed point point sets of smooth compact Lie group actions},
in: Current Trends in Transformation Groups, $K$-Monographs in Math. \textbf{7},
Kluwer Academic Publishers (2002), 79--104.


\smallskip
\bibitem{Sato}
Sato, Y.,
\textit{$2$-knots in $S^2 \times S^2$, and homology $4$-spheres},
Osaka J. Math. \textbf{28}, No 2 (1991), 243--253.

\smallskip
\bibitem{Taubes}
Taubes, C.,
\textit{The Seiberg--Witten invariants and symplectic forms},
Math. Reserach Letters \textbf{1} (1994), 809--822.

\end{document}